\newtheorem{theorem}{Theorem}
\newtheorem*{theorem*}{Theorem}
\newtheorem*{theorema}{Theorem A}
\newtheorem*{definition*}{Definition}
\newtheorem{proposition}[theorem]{Proposition}
\newtheorem*{proposition*}{Proposition}
\newtheorem{lemma}[theorem]{Lemma}
\newtheorem*{lemma*}{Lemma}
\newtheorem*{remark*}{Remark}
\newtheorem*{conjecture*}{Conjecture}
\numberwithin{theorem}{section}
\numberwithin{theorem}{section}
\numberwithin{equation}{section}
\renewcommand{\leq}{\leqslant}
\renewcommand{\geq}{\geqslant}
\DeclareMathOperator{\Int}{Int}
\providecommand\given{}
  \DeclarePairedDelimiterX\Set[1]\{\}{%
   \renewcommand\given{\SetSymbol[\delimsize]}
   #1
}
\DeclarePairedDelimiterX\norm[1]\lVert\rVert{
  \ifblank{#1}{\:\cdot\:}{#1}
}
\DeclarePairedDelimiterX\abs[1]\lvert\rvert{
  \ifblank{#1}{\:\cdot\:}{#1}
}
\DeclarePairedDelimiterX\innerp[2]{\langle}{\rangle}{#1,#2}
\DeclarePairedDelimiterXPP\Expectation[1]{\mathbb{E}}(){}{
   \renewcommand\given{\nonscript\,\delimsize\vert\nonscript\,\mathopen{}}
\DeclarePairedDelimiterXPP\muExpectation[2]{\mathbb{E}_{#2}}(){}{
   \renewcommand\given{\nonscript\,\delimsize\vert\nonscript\,\mathopen{}}
\DeclarePairedDelimiterXPP\Information[2]{I_{#2}}(){}{
   \renewcommand\given{\nonscript\,\delimsize\vert\nonscript\,\mathopen{}}
\DeclarePairedDelimiterXPP\Entropy[2]{H_{#2}}(){}{
   \renewcommand\given{\nonscript\,\delimsize\vert\nonscript\,\mathopen{}}
   \def\MR#1{}
\title{Density of periodic measures for piecewise monotonic maps}
\author{Ryuji Tazume}
\address{Joint Graduate School of Mathematics for Innovation, Kyushu University, 744 Motooka, Nishi-ku, Fukuoka, 819-0395, Japan}
\email{tazume.ryuji.962@s.kyushu-u.ac.jp}
\thanks{
  The author would like to express their gratitude to Masato Tsujii for fruitful discussions.
  The author was supported by WISE program (MEXT) at Kyushu University.
}
\date{\today}
\begin{document}
\begin{abstract}
	We show that any ergodic measure for a piecewise monotonic map with positive metric entropy is approximated by periodic measures in the weak-* sense.
	This partially answers Hofbauer-Raith's conjecture.
\end{abstract}
\maketitle
\section{Introduction}
This paper investigates the density of periodic measures in the space of invariant measures for a piecewise monotonic map defined on the unit interval $[0,1]$.
This property plays important roles in the study of piecewise monotonic maps from a thermodynamic point of view and has been used in various situations.
F. Hofbauer used the density of periodic measures to modify the definition of pressure (see, for instance, \cite{MR1366311}).
In \cite{Nakano-Yamamoto}, Y. Nakano and K. Yamamoto showed that the irregular set of a transitive piecewise monotonic map is either empty or has full topological entropy if the set of periodic measures is dense in the ergodic invariant measures.
Furthermore, in \cite{Chung-Yamamoto}, Y. M. Chung and K. Yamamoto proved that a piecewise monotonic map satisfying both the irreducibility of the Markov diagram and the density of periodic measures also satisfies the large deviation principle.
The detailed explanation of how the density condition of periodic measures is used is provided in their paper (see, for example, \cite[Remark 1]{Nakano-Yamamoto}, \cite[Section 3]{Chung-Yamamoto}).

The density of periodic measures has been studied by many researchers.
The condition holds for a broad class of transitive piecewise monotonic maps $T\colon[0,1]\to[0,1]$ with positive topological entropy.
For example, it holds for the following three cases:
\begin{itemize}
  \item The map $T$ is continuous.
  It follows from \cite[Corollary 10.5]{Blokh} that the space of periodic measures $M_T^\mathrm{per}([0,1])$ is dense in the space of $T$-ergodic measures $M_T^\mathrm{erg}([0,1])$ in this case.
  \item The map $T$ is a \emph{monotonic mod one transformation} (that is, there exists a strictly increasing and continuous function $f\colon[0,1]\to\mathbb{R}$ such that $T(x)=f(x)\pmod{1}$).
  The density of periodic measures was proven in \cite[Theorem 2]{Hofbauer1}.
  \item The map $T$ has two intervals of monotonicity.
  That is, there is a point $0<a<1$ such that both $T|_{(0,a)}$ and $T|_{(a,1)}$ are strictly monotonic and continuous.
  The density of periodic measures in this case was shown in \cite[Theorem 2]{Hofbauer-Raith}.
\end{itemize}
As of the time of writing, there is no known transitive piecewise monotonic map with positive topological entropy that lacks the density of periodic measures.
It is conjectured that the set of periodic measures is dense in the set of ergodic measures for all transitive piecewise monotonic maps with positive topological entropy.
This problem was originally posed by Hofbauer and Raith in their paper \cite{Hofbauer-Raith}.
Notably, in our paper, we provide a partial result of this problem.
\begin{theorema}
  Let $T\colon[0,1]\to[0,1]$ be a transitive piecewise monotonic map.
  If $\mu\in M_T^\mathrm{erg}([0,1])$ has positive metric entropy $h_\mu(T)>0$, then $\mu$ belongs to the weak-* closure of $M_T^\mathrm{per}([0,1])$.
\end{theorema}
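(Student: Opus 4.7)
\emph{Proof plan.} The plan is to pass to the Hofbauer--Markov extension of $T$, approximate the lifted measure there by periodic measures via a symbolic closing-up argument, and then push the result back down to $[0,1]$.

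First, I would invoke the Hofbauer-Markov diagram $(\hat{X},\hat{T})$ associated to $T$: a countable-state topological Markov shift equipped with a factor map $\pi\colon\hat{X}\to[0,1]$ that semiconjugates $\hat{T}$ to $T$ and is continuous in restriction to each state. By the classical lifting theorem of Hofbauer and Keller, the hypothesis $h_\mu(T)>0$ guarantees that $\mu$ admits a unique ergodic lift $\hat{\mu}$ on $\hat{X}$ with $\pi_*\hat{\mu}=\mu$ and $h_{\hat{\mu}}(\hat{T})=h_\mu(T)$; positive entropy is exactly the hypothesis that prevents the lifted mass from escaping to infinity in the non-compact tower. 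Ergodicity then confines $\mathrm{supp}(\hat{\mu})$ to a single irreducible component $\hat{C}\subset\hat{X}$, and transitivity of $T$ together with positivity of entropy ensures that $\hat{C}$ is itself a topologically transitive countable-state subshift.

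Next, I would approximate $\hat{\mu}$ in the weak-* sense by periodic $\hat{T}$-invariant measures supported in $\hat{C}$. The scheme is the usual symbolic closing-up: fix a Birkhoff-generic point $\hat{x}\in\hat{C}$ for $\hat{\mu}$ and a state $v\in\hat{C}$ with $\hat{\mu}([v])>0$, so that by ergodicity the orbit of $\hat{x}$ visits the cylinder $[v]$ with positive density; for large $n$ select two visits $k<\ell\leq n$ such that the empirical measure $\frac{1}{\ell-k}\sum_{j=k}^{\ell-1}\delta_{\hat{T}^j\hat{x}}$ is close to $\hat{\mu}$ on a prescribed finite family of cylinder test functions, and close the segment into a $\hat{T}$-periodic orbit by concatenating an admissible loop from $v$ to $v$ furnished by the irreducibility of $\hat{C}$. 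The resulting periodic measure $\hat{\mu}_n$ projects to $\mu_n:=\pi_*\hat{\mu}_n\in M_T^{\mathrm{per}}([0,1])$, since the semiconjugation carries periodic $\hat{T}$-orbits to periodic $T$-orbits. Descent of the weak-* convergence $\hat{\mu}_n\to\hat{\mu}$ to $\mu_n\to\mu$ amounts to testing against functions of the form $f\circ\pi$ with $f\in C([0,1])$, which are bounded and continuous on each state and therefore controllable by the cylinder approximation built into the construction.

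The main obstacle lies in this second step, where the non-compactness of $\hat{C}$ precludes a direct appeal to specification-type results. One must choose $v$ with enough mass to guarantee frequent returns, yet not so exotic that the irreducibility loop spoils the empirical statistics, and give quantitative control on the length of that loop so that its contribution to the periodic measure is negligible compared to the main segment of length $\ell-k$. It is here that the positive-entropy assumption (providing the recurrence statistics and the existence of the irreducible component) and the transitivity of $T$ (providing admissible loops in $\hat{C}$) must be coordinated; this coordination is expected to absorb the bulk of the technical work. Once the periodic orbits are built inside a finite union of states of controlled diameter in the tower, tightness on $\hat{X}$ is automatic, and the pushforward by $\pi$ delivers the desired weak-* approximation of $\mu$ by periodic measures of $T$.
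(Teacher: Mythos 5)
You take a genuinely different route from the paper. You lift $\mu$ to the Hofbauer--Keller Markov extension (using $h_\mu(T)>0$ to guarantee the lift) and propose to close up a generic orbit symbolically there, whereas the paper never introduces the Markov diagram: it passes to the natural (invertible) extension, uses the information function of $\xi$ relative to the past together with the martingale convergence theorem and Poincar\'e recurrence to show that $\limsup_{n\to\infty}\mu(T^n\xi_n(x))\geq c>0$ on a set of positive measure, and then closes up directly on the interval --- for a suitable generic $x$ the images $T^n\xi_n(x)$ contain a fixed ball around $T^{n_0}x$ infinitely often, so $\overline{\xi_l(T^{n_0}x)}\subset T^l\xi_l(T^{n_0}x)$ and the intermediate value theorem produces a periodic point shadowing the orbit segment. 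Your route is viable in principle, but as written the decisive step is deferred rather than executed, and the gaps are not where you locate them.

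Concretely, three things are missing. First, the obstacle you single out --- controlling the length of the connecting loop --- largely dissolves: if you cut the generic orbit at two returns to the \emph{same} state $v$, the Markov property makes the excised word itself an admissible loop, so no connector is needed. Second, and more seriously, the Hofbauer tower is an interval system, not an abstract shift: an admissible periodic word only gives a cylinder interval $J$ with $T^{l}(J)\supset J$, and the fixed point supplied by the intermediate value theorem may sit on the boundary of $J$, i.e.\ at a (pre)critical point where the branches break down and the symbolic itinerary is not realized by an actual orbit of $T$. The paper's argument is engineered precisely to avoid this: the estimate $\limsup_n\mu(T^n\xi_n(x))\geq c$ is used to arrange the strict inclusion of the \emph{closure} of the cylinder in the \emph{open} image, which forces the periodic point into the open cylinder; your plan contains no substitute for this step, and it is exactly where the positive-entropy hypothesis does its quantitative work beyond merely granting the lift. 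Third, to convert symbolic proximity of the periodic orbit to the generic segment into proximity of the projected orbits on $[0,1]$ you need $\max_{Z\in\xi_m}\operatorname{diam}Z\to 0$ uniformly, not merely the pointwise shrinking of Proposition \ref{prop:shrinking lemma}; this is true but requires an argument (a nested chain of cylinders with diameters bounded below would contradict the proposition at an interior point of $R$). None of these is insurmountable, but together they constitute the actual content of the proof, and your proposal acknowledges rather than supplies it.
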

\section{Preliminaries}
\subsection{Properties of piecewise monotonic maps}
We denote by $\mathbb{N}$ the set of all non-negative integers and put $\mathbb{N}^+=\mathbb{N}\setminus\Set{0}$.
Let $T\colon X\to X$ be a Borel measurable map on a compact metric space $X$.
We say that $T$ is \emph{transitive} if there exists a point $x\in X$ whose forward orbit $\Set{T^nx\given n\in\mathbb{N}}$ is dense in $X$.
We denote by $M(X)$ the set of all Borel probability measures on $X$ endowed with weak-* topology, by $M_T(X)\subset M(X)$ the set of all $T$-invariant measures, and by $M_T^\mathrm{erg}(X)\subset M_T(X)$ the set of $T$-ergodic measures.
A probability measure $\mu$ is called a \emph{periodic measure} if there is a periodic point $p\in X$ of period $n$ such that $\mu=(\sum_{j=0}^{n-1}\delta_{T^jp})/n$, where $\delta_x$ is the Dirac measure at $x\in X$, and let $M_T^\mathrm{per}(X)$ be the set of periodic measures on $X$.
Note that $M_T^\mathrm{per}(X)\subset M_T^\mathrm{erg}(X)$.
Finally, a Borel measurable map $T\colon[0,1]\to[0,1]$ on the unit interval $[0,1]$ is said to be a \emph{piecewise monotonic map} if there exist integer $N>1$ and $0=c_0<c_1<\cdots<c_N=1$, which we call \emph{critical points}, such that $T|_{(c_{i-1},c_i)}$ is strictly monotonic and continuous for each $1\leq i\leq N$.
Let $T\colon[0,1]\to[0,1]$ be a piecewise monotonic map and $C=\Set{c_0,\ldots,c_N}$ be the set of citical points of $T$.
The partition by critical points is denoted by $\xi=\Set*{(c_{i-1},c_i)\given 1\leq i\leq N}$.
We denote by $R$ the set of all points for which the orbit does not contain any critical points, defined as $R=\bigcap_{n=0}^\infty T^{-n}\bigcup_{Z\in\xi}Z$.
For each $k\in\mathbb{N}^+$, we define
\begin{align}
  \xi_k=\bigvee_{i=0}^{k-1}T^{-i}\xi=\Set*{\bigcap_{i=0}^{k-1}T^{-i}Z_i\neq\varnothing\given Z_i\in\xi,\,0\leq i\leq k-1}.
\end{align}
For each $x\in R$, let $\xi_k(x)$ be the unique open interval of $\xi_k$ such that it contains $x$.
If $T$ is transitive, then $\xi_k(x)$ shrinks to the one point set $\Set{x}$ as $k\to\infty$ for all $x\in R$ by the following proposition.
\begin{proposition}[{\cite[Proposition 6.1]{Yamamoto}}]\label{prop:shrinking lemma}
  Let $T\colon[0,1]\to[0,1]$ be a transitive piecewise monotonic map.
  Then we have $\bigcap_{k=1}^\infty\xi_k(x)=\Set{x}$ for all $x\in R$.
\end{proposition}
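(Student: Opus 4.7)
The plan is to argue by contradiction. Suppose $J := \bigcap_{k=1}^{\infty} \xi_k(x)$ is a non-degenerate open interval for some $x \in R$. From $J \subset \xi_k(x)$ together with the definition $\xi_k = \bigvee_{i=0}^{k-1} T^{-i}\xi$, the iterate $T^n|_J$ is continuous and strictly monotonic for every $n \ge 0$; that is, $J$ is a \emph{homterval}. Now fix a point $z$ with dense $T$-orbit: since every tail of a dense orbit is dense, the orbit enters $J$ infinitely often, so pick $0 \le n_1 < n_2$ with $T^{n_1}z, T^{n_2}z \in J$, and set $p := n_2 - n_1 \ge 1$ and $y := T^{n_1}z$. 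Then $y, T^p y \in J$, and hence $J \cap T^p J$ is a non-empty sub-interval.

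Next I would build a $T^p$-forward-invariant interval. Applying the monotonic continuous map $T^{kp}|_J$ to $J \cap T^p J$ shows $T^{kp}J \cap T^{(k+1)p}J \ne \varnothing$ for every $k \ge 0$, so consecutive iterates overlap and
\[
  U := \bigcup_{k=0}^{\infty} T^{kp}J
\]
is an open interval with $T^p U \subset U$. The monotonic continuous branches $T^p|_{T^{kp}J}$ agree on non-degenerate overlaps, so they share their orientation and glue to a monotonic continuous self-map $T^p \colon U \to U$. The same argument produces, for each $0 \le j < p$, a monotonic continuous self-map $T^p \colon T^j U \to T^j U$ on the open interval $T^j U$.

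Finally I would derive a contradiction. Every monotonic continuous self-map of an interval has only convergent orbits (the limits being periodic of period at most two). The set $E := \bigcup_{j=0}^{p-1} T^j U$ is forward $T$-invariant because $T^p U \subset U$; pick $N$ with $T^N z \in U$, which exists by density of the orbit. Then the tail $\{T^n z\}_{n \ge N}$ decomposes into the $p$ subsequences $\{T^{N+j+kp} z\}_{k \ge 0}$ for $j = 0, \ldots, p-1$, each being a $T^p$-orbit inside $T^j U$ and therefore convergent. The $T$-orbit of $z$ thus has at most $2p$ accumulation points in $[0,1]$, contradicting its density.

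The technically subtlest point is the construction in the second paragraph: one must verify that the branches $T^p|_{T^{kp}J}$ really do share a common orientation (two strictly monotonic continuous maps that agree on a non-degenerate interval necessarily do) and assemble into a single monotonic continuous self-map on $U$, and that the analogous self-map structure is inherited by each $T^j U$. Once this structural step is secured, the contrast between the trivial recurrence of monotonic self-maps of intervals and the density of the transitive orbit yields the contradiction.
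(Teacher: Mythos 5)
Your argument is correct, but note that the paper itself offers no proof of this proposition: it is quoted verbatim from Yamamoto's work, so there is nothing internal to compare against. What you have written is the standard ``no homtervals for transitive interval maps'' argument, and all the essential steps are sound: a non-degenerate intersection $\bigcap_k\xi_k(x)$ yields an interval on which every iterate is monotone and continuous; a dense orbit must visit it twice, forcing $J\cap T^pJ\neq\varnothing$; the chained images $T^{kp}J$ overlap consecutively and glue into an open interval $U$ with $T^pU\subset U$ on which $T^p$ is monotone and continuous (your remark that overlapping strictly monotone branches of the \emph{same} global map automatically share orientation is exactly the right justification); and the trivial asymptotics of monotone interval self-maps then contradict density of the orbit. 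Two small points are worth tightening in a write-up. First, $\bigcap_{k=1}^\infty\xi_k(x)$ is a nested intersection of open intervals and need not itself be open; you should take $J$ to be its (non-empty) interior. Second, for an orientation-reversing $T^p|_{T^jU}$ the individual orbits need not converge --- only the even and odd subsequences do --- but your parenthetical already accounts for this, and the bound of at most $2p$ accumulation points for the tail of the dense orbit is exactly what the contradiction requires, since a sequence with dense range in $[0,1]$ has uncountably many accumulation points. With those cosmetic fixes the proof is complete and self-contained, which is arguably an improvement over leaving the statement as an external citation.
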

Let $\mu\in M_T^\mathrm{erg}([0,1])$.
If there exists $x\in[0,1]$ such that $\mu(\Set{x})>0$, then $\mu\in M_T^\mathrm{per}([0,1])$ by the ergodicity of $\mu$.
Therefore, for $\mu\in M_T^\mathrm{erg}([0,1])\setminus M_T^\mathrm{per}([0,1])$, $\mu([0,1]\setminus R)=0$ and $\mathscr{B}([0,1])=\bigvee_{k=0}^\infty\xi_k\pmod{\mu}$ by Proposition \ref{prop:shrinking lemma}, where $\mathscr{B}([0,1])$ is the Borel $\sigma$-algebra of $[0,1]$ and $\bigvee_{k=0}^\infty\xi_k$ is the $\sigma$-algebra generated by $\bigcup_{k=0}^\infty\xi_k$, i.e., $(\xi_k)_{k=0}^\infty$ is a generator of $\mathscr{B}([0,1])$ with respect to $T$.
This fact is used later in the calculation of metric entropy.
\subsection{Metric entropy}
Let $(X,\mathscr{B},\mu)$ be a measure space, $\alpha=\Set{A_j}_{j\in J}$ a measurable partition, where $J$ is a finite or countable set.
The \emph{information function} corresponding to $\alpha$ is defined by
\begin{align}
  \Information*{\alpha}{\mu}(x)=-\sum_{j\in J}1_{A_j}(x)\log\mu(A_j),
\end{align}
and the \emph{entropy} of $\alpha$ is defined by
\begin{align}
  \Entropy{\alpha}{\mu}=\int\Information{\alpha}{\mu}(x)\,d\mu(x)=-\sum_{j\in J}\mu(A_j)\log\mu(A_j)\in[0,\infty].
\end{align}
More generally, for a $\sigma$-algebra $\mathscr{F}\subset\mathscr{B}$, the \emph{conditional information function} and the \emph{conditional entropy of $\alpha$ with respect to $\mathscr{F}$} are defined respectively by
\begin{align}
  \Information{\alpha\given\mathscr{F}}{\mu}(x)=-\sum_{j\in J}1_{A_j}(x)\log\muExpectation{1_{A_j}\given\mathscr{F}}{\mu}(x),
\end{align}
where $\muExpectation{1_{A_j}\given\mathscr{F}}{\mu}$ is the conditional expectation of $1_{A_j}$ with respect to $\mathscr{F}$, and by
\begin{align}
  \Entropy{\alpha\given\mathscr{F}}{\mu}=\int\Information{\alpha\given\mathscr{F}}{\mu}(x)\,d\mu(x).
\end{align}

For a partition $\beta$, let $\sigma(\beta)$ be the $\sigma$-algebra generated by $\beta$.
We say that $\beta$ is finer than $\alpha$ and denote $\beta\succ\alpha\pmod{\mu}$, if $\alpha\subset\sigma(\beta)\pmod{\mu}$, i.e., every element of $\alpha$ is equal to an element of $\sigma(\beta)$ up to a set of measure zero.
We write $\alpha=\beta\pmod{\mu}$ if $\beta\succ\alpha\pmod{\mu}$ and $\alpha\succ\beta\pmod{\mu}$ hold.
For partitions $\alpha$ and $\beta$, we denote their \emph{join} by $\alpha\vee\beta=\Set*{A\cap B\given A\in\alpha,B\in\beta}$.

We shall adopt the following notation for a finite or countable partition $\alpha$, invertible measurable map $T\colon X\to X$ and $-\infty<m<n<\infty$:
\begin{align}
  \alpha_m^n=T^{-m}\alpha\vee T^{-(m+1)}\alpha\vee\cdots\vee T^{-n}\alpha.
\end{align}
For simplicity, we sometimes use the following notation:
\begin{align}
  \alpha_{-\infty}^{-1}=\bigvee_{n=1}^\infty T^n\alpha,\quad\alpha_{-\infty}^\infty=\bigvee_{n=-\infty}^\infty T^n\alpha,
\end{align}
where $\bigvee_{n=1}^\infty T^n\alpha$ is the $\sigma$-algebra generated by the union of the partitions $\bigvee_{j=1}^nT^j\alpha$ for all $n\geq 1$.
The \emph{metric entropy} of a measure preserving system $(X,\mathscr{B},\mu,T)$ is defined to be
\begin{align}
  h_\mu(T)=\sup\Set*{h_\mu(T,\alpha)\given\text{$\alpha$ is a finite measurable partition}},
\end{align}
where $h_\mu(T,\alpha)=\displaystyle\lim_{n\to\infty}\frac{1}{n}\Entropy{\alpha_0^{n-1}}{\mu}$.
See \cite{MR1958753} for detailed properties of metric entropy.
\section{The proof of Theorem A}
In this section we present a proof of Theorem A.
Let $T\colon[0,1]\to[0,1]$ be a transitive piecewise monotonic map and $\mu\in M_T^\mathrm{erg}([0,1])\setminus M_T^\mathrm{per}([0,1])$.
We first show the following Lemma.
\begin{lemma}\label{lem:key lemma}
  Assume that there exists $c>0$ such that
  \begin{align}
    \mu\left(\Set*{x\in R\given\limsup_{n\to\infty}\mu(T^n\xi_n(x))\geq c}\right)>0.
  \end{align}
  Then $\mu$ belongs to the weak-* closure of $M_T^\mathrm{per}([0,1])$.
\end{lemma}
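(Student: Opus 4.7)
The plan is to combine the hypothesis with Birkhoff's theorem and a one-dimensional intermediate-value closing argument to produce periodic orbits whose periodic measures weak-$*$ approximate $\mu$.

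First, I would verify that the set
\[
B := \Set*{x\in R \given \limsup_{n\to\infty}\mu(T^n\xi_n(x))\geq c}
\]
is forward invariant. The identity $\xi_{n+1}(x)=\xi(x)\cap T^{-1}\xi_n(Tx)$ gives $T^{n+1}\xi_{n+1}(x)\subset T^n\xi_n(Tx)$, hence $\mu(T^n\xi_n(Tx))\geq\mu(T^{n+1}\xi_{n+1}(x))$ for every $n$, so $T(B)\subset B$. Ergodicity and $\mu(B)>0$ then upgrade this to $\mu(B)=1$, so a full-measure set of points is simultaneously Birkhoff-generic, satisfies $\xi_n(x)\to\{x\}$ by Proposition~\ref{prop:shrinking lemma}, and realizes the $\limsup$ condition; fix such an $x$.

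Next, fix a continuous test function $f$ and $\epsilon>0$. Since $\mu\in M_T^{\mathrm{erg}}([0,1])\setminus M_T^{\mathrm{per}}([0,1])$ is non-atomic, its cumulative distribution function is uniformly continuous on $[0,1]$, so there exists $\eta>0$ such that every interval of $\mu$-mass $\geq c$ has Lebesgue length $\geq\eta$. Choose $n$ along the hypothesis subsequence with $\mu(I_n)\geq c$, where $I_n:=T^n\xi_n(x)$; then $|I_n|\geq\eta$ and $T^nx$ is interior to $I_n$. Because $T^nx$ also belongs to $B$, I can choose $m$ large so that $\mu(T^m\xi_m(T^nx))\geq c$ and $|\xi_m(T^nx)|<\eta$, which forces $\xi_m(T^nx)\subset I_n$. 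By injectivity of $T^n$ on $\xi_n(x)$,
\[
T^{n+m}\xi_{n+m}(x)=T^m\bigl(T^n\xi_n(x)\cap\xi_m(T^nx)\bigr)=T^m\xi_m(T^nx),
\]
an interval of Lebesgue length $\geq\eta$ containing $T^{n+m}x$.

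The crucial closing step is to ensure the geometric inclusion $\xi_{n+m}(x)\subset T^{n+m}\xi_{n+m}(x)$. I would refine the choice of $n+m$ so that, in addition to the two hypothesis conditions above, Poincar\'e recurrence for the $\mu$-generic orbit places $T^{n+m}x$ arbitrarily close to $x$, and $T^{n+m}x$ is bounded away from both endpoints of $T^{n+m}\xi_{n+m}(x)$. Together with $|\xi_{n+m}(x)|\to 0$ (Proposition~\ref{prop:shrinking lemma}) and $|T^{n+m}\xi_{n+m}(x)|\geq\eta$, this yields the inclusion. The monotone continuous homeomorphism $T^{n+m}\colon\overline{\xi_{n+m}(x)}\to\overline{T^{n+m}\xi_{n+m}(x)}$ then has a fixed point $p\in\overline{\xi_{n+m}(x)}$ by the intermediate value theorem, i.e.\ a periodic point of period dividing $n+m$. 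Since $p\in\overline{\xi_{n+m}(x)}$, its $\xi$-itinerary matches that of $x$ for the first $n+m$ symbols, so by Proposition~\ref{prop:shrinking lemma} and the uniform continuity of $f$,
\[
\Bigl|\tfrac{1}{n+m}\sum_{k=0}^{n+m-1}f(T^kp)-\tfrac{1}{n+m}\sum_{k=0}^{n+m-1}f(T^kx)\Bigr|\longrightarrow 0,
\]
while the right-hand average tends to $\int f\,d\mu$ by Birkhoff; diagonalizing over a countable dense subset of $C([0,1])$ produces a sequence of periodic measures weak-$*$ converging to $\mu$.

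The main obstacle is the geometric closing: the hypothesis is purely measure-theoretic, whereas the intermediate-value argument needs the topological inclusion $\xi_{n+m}(x)\subset T^{n+m}\xi_{n+m}(x)$. Coordinating (i) the length lower bound from non-atomicity and the hypothesis at $T^nx$, (ii) the shrinking of $\xi_{n+m}(x)$, (iii) the recurrence of $T^{n+m}x$ to a neighborhood of $x$, and (iv) the position of $T^{n+m}x$ within its image interval — all along a common sub-sequence of times — is the heart of the argument.
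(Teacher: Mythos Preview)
Your architecture is right and matches the paper's: Birkhoff genericity, an intermediate-value closing, and an itinerary comparison. The forward-invariance of $B$ and the uniform length bound coming from non-atomicity of $\mu$ are both correct and pleasant. But the step you yourself flag as ``the main obstacle'' is a genuine gap, and the Poincar\'e-recurrence mechanism you sketch does not close it. Even if you could arrange $\mu(T^k\xi_k(x))\geq c$ and $|T^kx-x|$ small along a common subsequence of times $k$, you have \emph{no} control over where $T^kx$ sits inside the interval $T^k\xi_k(x)$: nothing prevents $T^kx$ from lying within $1/k$ of an endpoint for every such $k$, in which case $x\notin T^k\xi_k(x)$ and the desired inclusion $\xi_k(x)\subset T^k\xi_k(x)$ simply fails. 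Your two-step decomposition $k=n+m$ does not help, since it only rewrites $T^k\xi_k(x)=T^m\xi_m(T^nx)$ without locating $T^kx$ inside that interval; and there is no reason the three infinite sets of times witnessing (i)--(ii), (iii), and (iv) should intersect.

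The paper replaces recurrence to $x$ by a compactness argument on the image intervals themselves. Writing $T^{n_i}\xi_{n_i}(x)=(a_i,b_i)$ along a subsequence with $\mu((a_i,b_i))\geq c/2$, one passes to a further subsequence with $a_i\to a$, $b_i\to b$; non-atomicity forces $a<b$ and $\mu((a,b))>0$, and after dropping finitely many terms there is a fixed open interval $J\subset\bigcap_i(a_i,b_i)$ with $\mu(J)>0$. Birkhoff genericity of $x$ then produces $n_0$ with $T^{n_0}x\in J$, hence a fixed ball $B(T^{n_0}x,\eta)\subset T^{n_i}\xi_{n_i}(x)$ for every remaining $i$. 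The closing is now performed at $T^{n_0}x$ rather than at $x$: for $l=n_i-n_0$ large one has $\overline{\xi_l(T^{n_0}x)}\subset B(T^{n_0}x,\eta)\subset T^l\xi_l(T^{n_0}x)$, and the intermediate value theorem gives the periodic point in $\xi_l(T^{n_0}x)$. This compactness-then-Birkhoff step is precisely the missing idea that your conditions (iii)--(iv) were groping for.
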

\begin{proof}
  Put
  \begin{align}
    E=\Set*{x\in R\given\limsup_{n\to\infty}\mu(T^n\xi_n(x))\geq c}.
  \end{align}
  Since $\mu$ is $T$-ergodic and since $\mu(E)>0$ from the assumption, there exists $x\in E$ such that
  \begin{align}
    \lim_{n\to\infty}\frac{1}{n}\sum_{s=0}^{n-1}\delta_{T^sx}=\mu
    \label{eq:x generates mu}
  \end{align}
  in the weak-* sense by Birkoff's pointwise ergodic theorem.
  In the following discussion, we fix $x\in E$ satisfying \eqref{eq:x generates mu}.

  In what follows, we show that there exist $n_0\in\mathbb{N}$ and $\eta>0$ such that
  \begin{align}
    B(T^{n_0}x,\eta)\subset T^n\xi_n(x)\text{ for infinitely many }n\in\mathbb{N},
    \label{eq:take a subsequence}
  \end{align}
  where $B(T^{n_0}x,\eta)$ denotes the open interval with the center $T^{n_0}x$ and radius $\eta$.
  First, there exists a subsequence $(n_i)_{i=1}^\infty\subset\mathbb{N}$ such that $\mu(T^{n_i}\xi_{n_i}(x))\geq c/2$ for all $i$ by $x\in E$.
  Put $I_i=T^{n_i}\xi_{n_i}(x)$.
  Since $I_i$ is an open interval for all $i$, it can be expressed as $(a_i,b_i)$ using the real numbers $a_i,b_i\in[0,1]$ with $a_i<b_i$.
  By taking a subsequence of $(n_i)_{i=1}^\infty$, we may assume that $a_i,b_i$ converge to $a,b\in[0,1]$, respectively.
  Since $\mu(I_i)\geq c/2>0$ for all $i$ and $\mu$ doesn't have an atom, we have $a\neq b$.
  By taking a subsequence of $(n_i)_{i=1}^\infty$ so that $a_i,b_i$ are sufficiently close to $a,b$, respectively, we can obtain the inequality
  \begin{align}
    \mu\left(\Int\bigcap_{i=1}^\infty I_i\right)>0.
    \label{eq:nested open intervals}
  \end{align}
  From \eqref{eq:x generates mu} and \eqref{eq:nested open intervals}, we can get $n_0$ and $\eta$ satisfying \eqref{eq:take a subsequence}.

  Next we prove that $\mu$ belongs to the weak-* closure of $M_T^\mathrm{per}([0,1])$.
  Let $U$ be an arbitrary open neighbourhood of $\mu$.
  By the definition of the weak-* topology, there exist $\varepsilon>0$, $K\in\mathbb{N}$ and continuous real functions $f_1,\ldots,f_K\in C([0,1])$ such that
  \begin{align}
    \Set*{\tilde{\mu}\given\abs*{\int_{[0,1]}f_j\,d\tilde{\mu}-\int_{[0,1]}f_j\,d\mu}<\varepsilon\text{ for all }j=1,\ldots,K}\subset U.
  \end{align}
  It sufficies to show that there is a periodic point $p\in[0,1]$ such that $\mu_p\in U$.
  Put $\gamma=\max_{j=1,\ldots,K}\norm{f_j}_\infty$.
  From Proposition \ref{prop:shrinking lemma} and the uniform continuity of $f_j$, we can take an $m\in\mathbb{N}$ with
  \begin{align}
    \max_{j=1,\ldots,K}\sup_{Z\in\xi_m}\sup_{y_1,y_2\in Z}\abs{f_j(y_1)-f_j(y_2)}<\frac{\varepsilon}{5}.
    \label{eq:control the second and fourth terms}
  \end{align}
  Choose a $\delta>0$ such that
  \begin{align}
    \gamma\delta\cdot\#\xi_m<\frac{\varepsilon}{5},
    \label{eq:control the third term 1}
  \end{align}
  where $\#\xi_m$ is the number of elements of $\xi_m$.
  Let $n_0\in\mathbb{N}$, $\eta>0$ and $(n_i)_{i=1}^\infty\subset\mathbb{N}$ be such that \eqref{eq:take a subsequence} holds.
  By \eqref{eq:x generates mu}, there is an $N\in\mathbb{N}$ such that
  \begin{align}
    \abs*{\frac{1}{n}\sum_{s=0}^{n-1}f_j(T^{n_0+s}x)-\int_{[0,1]}f_j\,d\mu}<\frac{\varepsilon}{5}
    \label{eq:control the first term}
  \end{align}
  for all $j=1,\ldots,K$ and $n\geq N$.
  By Proposition \ref{prop:shrinking lemma}, there is an $N_0\in\mathbb{N}$ such that $\xi_k(T^{n_0}x)\subset B(T^{n_0}x,\eta/2)$ for all $k\geq N_0$.
  Take an $i$ such that $n_i\geq n_0+\max\Set{N,N_0,\lceil m/\delta\rceil+1}$ and put $l=n_i-n_0$.
  Then we have
  \begin{align}
    \xi_l(T^{n_0}x)\subset B(T^{n_0}x,\eta/2)\subset B(T^{n_0}x,\eta)\subset T^{n_i}\xi_{n_i}(x)\subset T^l\xi_l(T^{n_0}x).
  \end{align}
  Hence, the closure of $\xi_l(T^{n_0}x)$ is contained in $T^l\xi_l(T^{n_0}x)$.
  Since $T^l\colon\xi_l(T^{n_0}x)\to T^l\xi_l(T^{n_0}x)$ is continuous, there exists a periodic point $p\in\xi_l(T^{n_0}x)$ of period $l$ by the intermediate value theorem.
  For each $Z\in\xi_m$, we have
  \begin{align}
    \abs*{\sum_{s=0}^{l-1}\delta_{T^{n_0+s}x}(Z)-\sum_{s=0}^{l-1}\delta_{T^sp}(Z)}\leq m
    \label{eq:difference of gen. x and periodic one}
  \end{align}
  since $T^{n_0+s}x$ and $T^sp$ are in tha same element of $\xi$ for all $0\leq s\leq l-1$ by $p\in\xi_l(T^{n_0}x)$.
  Dividing both sides of \eqref{eq:difference of gen. x and periodic one} by $l$, we obtain
  \begin{align}
    \max_{Z\in\xi_m}\abs*{\frac{1}{l}\sum_{s=0}^{l-1}\delta_{T^{n_0+s}x}(Z)-\mu_p(Z)}<\delta.
    \label{eq:control the third term 2}
  \end{align}
  For every $Z\in\xi_m$, choose an $\zeta_Z\in Z$.
  Fix a $j=1,\ldots,K$.
  Then
  \begin{align}
    &\abs*{\int_{[0,1]}f_j\,d\mu-\int_{[0,1]}f_j\,d\mu_p}\\
    \leq&\abs*{\int_{[0,1]}f_j\,d\mu-\frac{1}{l}\sum_{s=0}^{l-1}f_j(T^{n_0+s}x)}\\
    &\phantom{\int_{[0,1]}f_j\,d\mu}+\frac{1}{l}\sum_{s=0}^{l-1}\sum_{Z\in\xi_m}\abs*{(f_j1_Z)(T^{n_0+s}x)-f_j(\zeta_Z)\delta_{T^{n_0+s}x}(Z)}\\
    &\phantom{\int_{[0,1]}f_j\,d\mu+\frac{1}{l}\sum_{s=0}^{l-1}}+\sum_{Z\in\xi_m}\abs{f_j(\zeta_Z)}\abs*{\frac{1}{l}\sum_{s=0}^{l-1}\delta_{T^{n_0+s}x}(Z)-\mu_p(Z)}\\
    &\phantom{\int_{[0,1]}f_j\,d\mu+\frac{1}{l}\sum_{s=0}^{l-1}+\sum_{Z\in\xi_m}}+\sum_{Z\in\xi_m}\int_Z\abs{f_j(\zeta_Z)-f_j}\,d\mu_p.
  \end{align}
  By \eqref{eq:control the first term}, the first sum on the right hand side is smaller than $\varepsilon/5$ and by \eqref{eq:control the second and fourth terms} the fourth sum is smaller than $\varepsilon/5$ as well.
  Again using \eqref{eq:control the second and fourth terms}, we get
  \begin{align}
    \sum_{Z\in\xi_m}\abs*{(f_j1_Z)(T^{n_0+s}x)-f_j(\zeta_Z)\delta_{T^{n_0+s}x}(Z)}<\frac{\varepsilon}{5},
  \end{align}
  and therefore also the second sum is smaller than $\varepsilon/5$.
  We deduce by \eqref{eq:control the third term 1} and \eqref{eq:control the third term 2} that
  \begin{align}
    \sum_{Z\in\xi_m}\abs{f_j(\zeta_Z)}\abs*{\frac{1}{l}\sum_{s=0}^{l-1}\delta_{T^{n_0+s}x}(Z)-\mu_p(Z)}<\frac{\varepsilon}{5}.
  \end{align}
  This completes the proof.
\end{proof}
\begin{proof}[Proof of Theorem A]
  Let $(X,\mathscr{B},\nu,\sigma)$ be the invertible extension of the original piecewise monotonic map $([0,1],\mathscr{B}([0,1]),\mu,T)$ (See \cite{MR0143873, MR1447586}).
  We note that $h_\nu(\sigma)=h_\mu(T)>0$.
  We can check that $\eta_{-\infty}^\infty=\mathscr{B}\pmod{\nu}$ by Proposition \ref{prop:shrinking lemma}, where $\eta=\pi^{-1}\xi$ and $\pi$ is the natural projection from $X$ to $[0,1]$.
  From Kolmogorov-Sinai's generator theorem, we have $h_\nu(\sigma)=h_\nu(\sigma,\eta)$.
  Since $\sigma$ is invertible, we have $h_\nu(\sigma^{-1})=h_\nu(\sigma)$.
  Therefore,
  \begin{align}
    \int\Information*{\eta\given\eta_{-\infty}^{-1}}{\nu}\,d\nu=\Entropy{\eta\given\eta_{-\infty}^{-1}}{\nu}=h_\nu(\sigma^{-1},\eta)=h_\nu(\sigma^{-1})=h_\nu(\sigma)>0.
  \end{align}
  Put
  \begin{align}
    A=\Set*{x\in X\given\Information{\eta\given\eta_{-\infty}^{-1}}{\nu}(x)\geq\frac{h_\nu(\sigma)}{2}}.
  \end{align}
  We have $\nu(A)>0$ and there is a $t\in\mathbb{N}^+$ with $\nu(A\cap\sigma^{-t}A)>0$ since $\nu$ is $\sigma$-ergodic.
  Set $B=A\cap\sigma^{-t}A$.
  For each $x\in B$, we get
  \begin{align}
    \min\Set*{\Information*{\eta\given\eta_{-\infty}^{-1}}{\nu}(x),\Information*{\eta\given\eta_{-\infty}^{-1}}{\nu}(\sigma^tx)}\geq\frac{h_\nu(\sigma)}{2}.
  \end{align}
  Since $\Information*{\eta\given\eta_{-n}^{-1}}{\nu}\to\Information*{\eta\given\eta_{-\infty}^{-1}}{\nu}$ as $n\to\infty$ in $L^1(\nu)$ from the Martingale convergence theorem, there are $D\in\mathscr{B}$ and $N\in\mathbb{N}$ such that $\nu(D)>1-\nu(B)$ and
  \begin{align}
    \abs*{\Information*{\eta\given\eta_{-n}^{-1}}{\nu}(x)-\Information*{\eta\given\eta_{-\infty}^{-1}}{\nu}(x)}\leq\frac{h_{\nu}(\sigma)}{4}
  \end{align}
  for all $n\geq N$ and $x\in D$.
  We can take a $x\in B\cap D$ such that $\sigma^nx\in B\cap D$ holds for infinitely many $n\in\mathbb{N}$ by $\nu(B\cap D)>0$ and Poincar\'e's recurrence theorem.
  Then
  \begin{align}
    \min\Set*{\Information*{\eta\given\eta_{-n}^{-1}}{\nu}(\sigma^nx),\Information*{\eta\given\eta_{-(n+t)}^{-1}}{\nu}(\sigma^{n+t}x)}\geq\frac{h_\nu(\sigma)}{4}
  \end{align}
  holds for infinitely many $n\geq N$.
  From the definition of the information function and the choice of $x$, we have $\mu(T^n\xi_n(\pi x))<\mu(T^{n-1}\xi_{n-1}(\pi x))$ and $\mu(T^{n+t}\xi_{n+t}(\pi x))<\mu(T^{n+t-1}\xi_{n+t-1}(\pi x))$ for infinitely many $n\geq N$.
  Then we have $\partial(T^n\xi_n(\pi x))\cap C\neq\varnothing$ and $T^n\xi_n(\pi x)\cap T^{-t}C\neq\varnothing$ for infinitely many $n\geq N$.
  Since $T^n\xi_n(\pi x)$ is an open interval and $T^{-t}C$ is a finite set, we can find $c>0$ such that $\limsup_{n\to\infty}\mu(T^n\xi_n(\pi x))\geq c$.
  Therefore, by Lemma \ref{lem:key lemma}, $\mu$ belongs to the weak-* closure of $M_T^\mathrm{per}([0,1])$.
\end{proof}
\bibliographystyle{amsplain}
\bibliography{main}
\end{document}